\newtheorem{thm}{Theorem}[section]
\newtheorem{lem}[thm]{Lemma}
\newtheorem{cor}[thm]{Corollary}
\newtheorem{con}[thm]{Conjecture}
\newtheorem{thmA}{Theorem}
\newtheorem{corA}[thmA]{Corollary}
\theoremstyle{definition}
\theoremstyle{remark}
\numberwithin{equation}{section}
\newcommand{\NM}{\vartriangleleft}
\DeclareMathOperator{\Irr}{Irr}
\DeclareMathOperator{\N}{N}
\DeclareMathOperator{\Syl}{Syl}
\DeclareMathOperator{\GL}{GL}
\DeclareMathOperator{\SL}{SL}
\DeclareMathOperator{\IBr}{IBr}
\begin{document}

\title{Counting lifts of irreducible Brauer characters}

\author{Junwei Zhang}
\address{School of Mathematical and Statistics, Shanxi University, Taiyuan, 030006, China.}
\email{zhangjunwei@sxu.edu.cn}
\email{changxuewu@sxu.edu.cn}
\email{jinping@sxu.edu.cn}

\author{Xuewu Chang}

\author{Ping Jin*}

\thanks{*Corresponding author}

\keywords{$p$-solvable group; character; Brauer character; lift; vertex}

\date{}

\maketitle

\begin{abstract}
Let $p$ be an odd prime,
and suppose that $G$ is a $p$-solvable group and $\varphi\in\IBr(G)$
has vertex $Q$. In 2011, Cossey, Lewis and Navarro proved that
the number of lifts of $\varphi$ is at most $|Q:Q'|$ whenever $Q$ is normal in $G$.
In this paper, we present an explicit description of the set of lifts of $\varphi$ with a given vertex pair $(Q,\delta)$ under a weaker condition on $Q$, and thus generalize their result.
\end{abstract}

\section{Introduction}
Fix a prime $p$, and let $G$ be a $p$-solvable group.
If $\varphi\in\IBr(G)$ is an irreducible Brauer character, then the Fong-Swan theorem states that
there exists $\chi\in\Irr(G)$ such that the restriction $\chi^0$ of $\chi$ to the $p$-regular elements of $G$ is $\varphi$.
In this case, we say that $\chi$ is a {\bf lift} of $\varphi$, and we write $L_\varphi$ for the set of all lifts of $\varphi$.
In \cite{C2007}, Cossey proposed the following conjecture.

\begin{con}[Cossey]
Let $G$ be a $p$-solvable group, and let $\varphi\in\IBr(G)$. Then
$$|L_{\varphi}|\leq|Q:Q'|,$$
where $Q$ is a vertex for $\varphi$ (in the sense of Green).
\end{con}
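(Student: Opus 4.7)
The plan is to proceed by induction on $|G|$, exploiting Clifford theory for Brauer characters and the vertex-pair formalism that the paper foregrounds. The base cases are benign: if $p\nmid|G|$, then $\varphi\in\Irr(G)$ is its own unique lift and $Q=1$; if $Q=1$, Navarro's vertex theory already forces $|L_\varphi|=1$.

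The first substantive step is a Clifford-type reduction. Let $N$ be a minimal normal subgroup of $G$, pick an irreducible Brauer constituent $\theta$ of $\varphi_N$, and set $T=I_G(\theta)$. If $T<G$, the Clifford correspondence for Brauer characters produces a unique $\psi\in\IBr(T\mid\theta)$ with $\psi^G=\varphi$, and one expects the induction map $\xi\mapsto\xi^G$ to restrict to a bijection $L_\psi\to L_\varphi$ (this is the Fong--Reynolds reduction adapted to lifts). Since $\varphi$ and $\psi$ share a vertex, induction applied to $(T,\psi)$ closes this case. So one may assume $\theta$ is $G$-invariant. If $N$ is a $p$-group, then $\theta=1_N$ and $\varphi$ is inflated from $\bar\varphi\in\IBr(G/N)$; one then has to match lifts of $\varphi$ with those of $\bar\varphi$ while tracking how the vertex changes, since a lift of $\varphi$ need not be inflated from $G/N$. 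If $N$ is a $p'$-group, a character-triple isomorphism reduces to the case where $N$ is central of $p'$-order, after which the Gajendragadkar--Isaacs $p$-special/$p'$-special factorization isolates the $p$-part of any lift over $\theta$.

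The hard part will be the residual primitive configuration, in which every normal subgroup contributes only $G$-invariant Brauer constituents and no inflation is available. This is exactly where Cossey's conjecture is known to resist a direct attack, and it is the reason the full conjecture remains open. The paper evidently sidesteps this obstacle by imposing on $Q$ a condition weaker than normality but strong enough to pin down a $G$-stable vertex pair $(Q,\delta)$; the strategy will then be to build an explicit injection from the set of lifts with vertex pair $(Q,\delta)$ into a set of characters of a subgroup of $\N_G(Q)$ manifestly of size at most $|Q:Q'|$, and to recover Cossey--Lewis--Navarro by specialising to normal $Q$. For the fully general conjecture one would need a genuinely new input — for instance, a refined Glauberman-type correspondence or a use of Isaacs' $B_\pi$-theory — to gain control in the primitive case; short of that, the approach above can only deliver the statement under the very hypothesis the paper isolates.
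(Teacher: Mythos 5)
The statement you were asked about is labelled as a \emph{conjecture} in the paper, and the paper contains no proof of it: Cossey's conjecture is open in general, and the authors only establish the bound $|L_\varphi(Q,\delta)|\le|{\bf N}_G(Q):{\bf N}_G(Q,\delta)|$ (hence $|L_\varphi|\le|Q:Q'|$ after summing over the linear $\delta\in\Irr(Q)$ up to conjugacy) under the extra hypothesis that $Q$ is a Sylow $p$-subgroup of some normal subgroup $N$ of $G$ to which $\delta$ extends, with $p$ odd. Your proposal correctly diagnoses exactly this situation: the Clifford/Fong--Reynolds reductions you sketch are the standard first moves, the ``residual primitive configuration'' you identify is precisely where the conjecture resists attack, and you rightly conclude that the approach can only deliver the statement under the kind of hypothesis the paper isolates. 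So there is no error to point out, but neither is there a proof to compare against: what you have written is an accurate assessment of the state of the problem rather than a proof, and the paper's actual content is the conditional result you describe in your final paragraph (carried out via the $G$-stability of $\delta$ on $T=N{\bf N}_G(Q,\delta)$, the bijection $\Irr(T|Q,\delta)\to\IBr(T|Q)$ of Wang--Jin, and the counting bound $|I_\varphi(T|Q)|\le|G:T|$). One small caveat: your suggestion that the $p$-group case of the reduction gives $\theta=1_N$ and clean inflation is, as you note, already delicate because lifts of $\varphi$ need not be inflated from $G/N$; this is one of the genuine obstructions, not merely a bookkeeping issue.
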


Our motivation is to generalize the following result,
which is the main content of Theorem A of \cite{CLN2011}.
Note that Corollary B of that paper establishes Cossey's conjecture
in the case where $Q$ is abelian, so we will focus on the normal vertex case.

\begin{thm}[Cossey-Lewis-Navarro]\label{CLN}
Let $G$ be a $p$-solvable group with $p>2$, and $\varphi\in\IBr(G)$.
If $\varphi$ has a normal vertex $Q$, then $|L_{\varphi}|\leq|Q:Q'|$.
\end{thm}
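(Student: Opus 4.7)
The plan is to proceed by induction on $|G|$, leveraging Clifford theory applied at the normal subgroup $Q$.

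For the base case $G=Q$, since $G$ is a $p$-group we have $\IBr(G)=\{1_G\}$, so $\varphi=1_G$ and $L_\varphi$ consists precisely of the linear characters of $G$. This gives $|L_\varphi|=|G:G'|=|Q:Q'|$, and in fact equality.

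For the inductive step, fix any $\chi\in L_\varphi$, pick an irreducible constituent $\theta$ of $\chi_Q$, and let $T=I_G(\theta)$. If $T<G$, then by the Clifford correspondence $\chi=\psi^G$ for a unique $\psi\in\Irr(T\mid\theta)$; and since induction commutes with restriction to $p$-regular elements and $\varphi$ is irreducible, $\psi^0$ lies in $\IBr(T)$ and is the Brauer-Clifford correspondent of $\varphi$. I would then verify that $Q$ remains a normal vertex of $\psi^0$ in $T$, so the inductive hypothesis applies to $T$: the lifts of $\psi^0$ are in bijection (via induction) with the lifts of $\varphi$ lying over $\theta$, and their number is at most $|Q:Q'|$. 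Since lifts of $\varphi$ over $G$-conjugates of $\theta$ are permuted by $N_G(T)/T$ and yield the same bound, we obtain $|L_\varphi|\leq|Q:Q'|$ in this case.

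The essential difficulty is the case $T=G$, i.e.\ when every irreducible constituent of $\chi_Q$ is $G$-invariant. Here the vertex hypothesis and the oddness of $p$ must both be used. The plan is to construct a canonical map
\[
\Phi\colon L_\varphi\longrightarrow\Irr(Q/Q')
\]
sending each lift $\chi$ to a linear character $\delta_\chi$ obtained from a vertex pair $(Q,\delta_\chi)$ in the sense of Isaacs--Navarro, and to show that $\Phi$ is injective (or, more finely, that the fibres and the image together are controlled so that $|L_\varphi|\leq|Q:Q'|$). The construction of $\Phi$ proceeds through the theory of nuclei: starting from $\chi$, one chooses a chain of subgroups $G=G_0>G_1>\cdots>G_n\supseteq Q$ together with irreducible characters inducing $\chi$ at each stage, ending at a character of a group with a normal $p$-complement over $Q$ whose restriction to $Q$ is $\chi(1)/|G:Q|^{1/2}$ times a linear $\delta_\chi$. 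The hypothesis $p>2$ enters here because the canonical extensions and character-triple isomorphisms underlying the nucleus construction are only uniquely determined in odd characteristic (the relevant cohomological obstructions being $2$-torsion).

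The main obstacle, therefore, is the $T=G$ case and specifically the construction and well-definedness of the assignment $\chi\mapsto\delta_\chi$; once this is in hand, the bound $|L_\varphi|\leq|\Irr(Q/Q')|=|Q:Q'|$ follows directly from injectivity of $\Phi$, and the finer description alluded to in the abstract (lifts with a prescribed vertex pair $(Q,\delta)$) amounts to a refined analysis of the fibres of $\Phi$.
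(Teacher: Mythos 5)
Your proposal is a plan rather than a proof, and both halves of the plan have genuine gaps. In the case $T=I_G(\theta)<G$ the accounting does not close. Two different lifts of $\varphi$ lying over the same $\theta$ have Clifford correspondents $\psi,\psi'\in\Irr(T\mid\theta)$ whose restrictions $\psi^0,(\psi')^0$ to $p$-regular elements are irreducible Brauer characters of $T$ inducing $\varphi$, but these need not coincide; so the lifts of $\varphi$ over the orbit of $\theta$ are distributed among the lifts of possibly many Brauer characters $\eta$ of $T$ with $\eta^G=\varphi$, and you must additionally bound the number of such $\eta$ (this is exactly the role of Lemma~\ref{a}, which gives at most $|G:T|$ of them). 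Worse, since the constituent $\theta$ depends on the chosen lift $\chi$, your argument at best bounds the number of lifts over each $G$-orbit of constituents of restrictions to $Q$; summing over these orbits yields a bound that is a multiple of $|Q:Q'|$, not $|Q:Q'|$ itself. In the case $T=G$, which you correctly identify as the essential difficulty, you only describe the desired map $\Phi\colon\chi\mapsto\delta_\chi$ and assert its injectivity. But $\Phi$ is not injective in general: its fibre over $\delta$ is the set $L_\varphi(Q,\delta)$ of lifts with vertex pair $(Q,\delta)$, which can contain up to $|G:{\bf N}_G(Q,\delta)|$ elements. (Also, the claimed degree $\chi(1)/|G:Q|^{1/2}$ for the nucleus character is not correct: if $(W,\gamma)$ is a nucleus for $\chi$, then $\gamma(1)=\chi(1)/|G:W|$.)

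The correct structure, which is what Theorem~A of this paper implements in greater generality, is as follows. First, by the Cossey--Lewis theorem (Lemma~\ref{CL1.2}; this is the essential use of $p>2$), every lift of $\varphi$ has all its vertex pairs linear and conjugate, so $L_\varphi$ is partitioned into the sets $L_\varphi(Q,\delta)$ as $\delta$ runs over representatives of the $G$-orbits of linear characters of $Q$. Second, for each such $\delta$ one proves $|L_\varphi(Q,\delta)|\le|{\bf N}_G(Q):{\bf N}_G(Q,\delta)|$, which for $Q\NM G$ is the orbit size $|G:{\bf N}_G(Q,\delta)|$; this is the real content, obtained by matching $L_\varphi(Q,\delta)$ bijectively with the set of Brauer characters of $T={\bf N}_G(Q,\delta)$ with vertex $Q$ that induce $\varphi$ (via the lifting bijection of Lemma~\ref{WJ} for the $T$-stable character $\delta$) and then invoking Lemma~\ref{a}. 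Finally, summing over orbit representatives gives $\sum_\delta|G:{\bf N}_G(Q,\delta)|=|\Irr(Q/Q')|=|Q:Q'|$. Your sketch never isolates, let alone establishes, the per-orbit bound in the second step, which is where all the work lies.
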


It is known that Cossey's conjecture has a strong form for odd prime $p$.
Recall that a pair $(Q,\delta)$ is a (generalized) {\bf vertex} for $\chi\in\Irr(G)$
if there exist a subgroup $W\le G$ and some character $\gamma\in\Irr(W)$
with $\gamma^G=\chi$, and such that $Q$ is a Sylow $p$-subgroup of $W$ and $\gamma$
is {\bf factorable} with $(\gamma_p)_Q=\delta$,
where $\gamma_p$ denotes the $p$-special factor of $\gamma$
(for definitions, see \cite{N2002} or the next section).
In this situation, we refer to $(W,\gamma)$ as a {\bf nucleus} for $\chi$
(see Definition 4.4 of \cite{C2010}).
A crucial property of vertices, which is proved by Cossey and Lewis in \cite{CL2012}
(see also Lemma \ref{CL1.2} below), is that
if $\chi\in\Irr(G)$ is a lift, where $G$ is a $p$-solvable group with $p>2$,
then all vertex pairs for $\chi$ are conjugate and linear.
In particular, the definition of vertices above is consistent with
that given by Navarro in \cite{N2002} in the case where $p$ is an odd prime,
and we also use $\Irr(G|Q,\delta)$ to denote the set of all members of $\Irr(G)$ with vertex $(Q,\delta)$.

\begin{con}[Cossey-Lewis]
Let $G$ be a $p$-solvable group with $p>2$, and let $\varphi\in\IBr(G)$ with vertex $Q$.
If $\delta$ is a linear character of $Q$, then
$$|L_\varphi(Q,\delta)| \le |{\bf N}_G(Q):{\bf N}_G(Q,\delta)|,$$
where $L_{\varphi}(Q,\delta)$ denotes the set of those lifts of $\varphi$ with vertex pair $(Q,\delta)$,
and ${\bf N}_G(Q,\delta)$ is the set of elements of ${\bf N}_G(Q)$ that stabilize $\delta$.
\end{con}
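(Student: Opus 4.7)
The plan is to reduce, via Green correspondence, to the normal vertex case treated in Theorem \ref{CLN}, and then refine the counting there by tracking the vertex pair $(Q,\delta)$ explicitly. By Lemma \ref{CL1.2}, the vertex pairs of lifts of $\varphi$ are $G$-conjugate and linear, so any $\delta'$ with $L_\varphi(Q,\delta') \ne \emptyset$ has the form $\delta' = \delta^n$ for some $n \in N_G(Q)$. Since $L_\varphi(Q,\delta) = L_\varphi(Q,\delta^n)$ for every $n \in N_G(Q)$, the orbit size $|N_G(Q):N_G(Q,\delta)|$ is the natural candidate bound.

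The first main step is to identify $L_\varphi(Q,\delta)$ with the analogous set of lifts for the Green correspondent $\psi \in \IBr(N_G(Q))$ via the Green correspondence for $p$-solvable groups; this transports the counting problem to $H = N_G(Q)$, where $Q$ is a normal vertex of $\psi$. Inside $H$, the vertex pair $(Q,\delta)$ becomes explicit Clifford-theoretic data of a lift, so one analyses the lifts of $\psi$ with vertex pair $(Q,\delta)$ through the factorization $\gamma = \gamma_{p'}\gamma_p$ of nucleus characters. Here the odd-prime hypothesis is essential: it ensures that the $p$-special factor $\gamma_p$ sitting above $\delta$ is uniquely determined and behaves well under restriction to $Q$, so that a canonical nucleus can be pinned down inside $N_G(Q,\delta)$.

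The core remaining step is to show, in $H$ with $Q$ normal, that the lifts of $\psi$ whose vertex pair uses $\delta$ are controlled by the $H$-stabilizer of $\delta$, giving the refined bound $|L_\psi(Q,\delta)| \le |H:N_H(Q,\delta)|$. The main obstacle is exactly this refined count in the normal vertex setting: Theorem \ref{CLN} only yields the coarser bound $|L_\psi| \le |Q:Q'|$, so to obtain the per-$\delta$ statement one must stratify its proof along the $N_G(Q)$-orbits on $\Lin(Q)$ and show that distinct lifts in $L_\psi(Q,\delta)$ yield nuclei that are not $N_H(Q,\delta)$-conjugate. This seems to require a careful injectivity analysis of the nucleus characters $(W,\gamma)$ with $(\gamma_p)_Q = \delta$ held fixed, and is presumably where the weaker hypothesis on $Q$ alluded to in the abstract allows the argument to go through; summing over $N_G(Q)$-orbits of admissible $\delta$ should then recover, and refine, Cossey's bound $|L_\varphi| \le |Q:Q'|$.
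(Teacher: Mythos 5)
The statement you are trying to prove is labelled a \emph{Conjecture} in the paper (the Cossey--Lewis conjecture), and the paper does not prove it. What the paper actually establishes is Theorem A, which yields the bound $|L_\varphi(Q,\delta)|\le |{\bf N}_G(Q):{\bf N}_G(Q,\delta)|$ only under the additional hypothesis that there exists a normal subgroup $N\NM G$ with $Q\in\Syl_p(N)$ and $\delta$ extending to $N$ (and Corollary B, the special case $N=KQ$ with $K$ a normal $p'$-subgroup). So there is no ``paper's own proof'' of the full conjecture to compare against, and any complete argument you gave would be a new result. Your proposal is not such an argument: it has two genuine gaps, one of which is essentially the whole difficulty of the problem.

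The first and fatal gap is the reduction via Green correspondence. You assert that the Green correspondence $\varphi\leftrightarrow\psi$ between $\IBr(G|Q)$ and $\IBr({\bf N}_G(Q)|Q)$ ``transports the counting problem'' to ${\bf N}_G(Q)$, i.e.\ identifies $L_\varphi(Q,\delta)$ with $L_\psi(Q,\delta)$. No such identification is known: the Green correspondence is a statement about Brauer characters (or modules), whereas lifts live in $\Irr(G)$ versus $\Irr({\bf N}_G(Q))$, and there is no canonical map between the two sets of lifts, let alone one preserving vertex pairs. If this reduction were available, Cossey's conjecture for $p>2$ would follow at once from Theorem \ref{CLN}, which is precisely what remains open. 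The second gap you acknowledge yourself: even granting the reduction, the refined per-$\delta$ bound $|L_\psi(Q,\delta)|\le |H:{\bf N}_H(Q,\delta)|$ in the normal-vertex case is left as a ``core remaining step'' that ``seems to require'' an injectivity analysis you do not carry out. For contrast, the paper's actual route in Theorem A avoids Green correspondence entirely: it uses the hypothesis on $N$ to show $\delta$ is $T$-stable for $T=N{\bf N}_G(Q,\delta)$, invokes the Wang--Jin bijection $\Irr(T|Q,\delta)\to\IBr(T|Q)$ (Lemma \ref{WJ}) to lift uniquely over $T$, proves via Clifford theory and the factorability of constituents of $\chi_N$ that induction gives a bijection $I_\varphi(T|Q)\to L_\varphi(Q,\delta)$, and then bounds $|I_\varphi(T|Q)|$ by $|G:T|=|{\bf N}_G(Q):{\bf N}_G(Q,\delta)|$ using Lemma \ref{a}. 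Without some substitute for the normal subgroup $N$, neither your sketch nor the paper's method proves the conjecture as stated.
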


To state our result, we need to introduce more notation.
We use $\IBr(G|Q)$ to denote the set of irreducible Brauer characters of $G$ with vertex $Q$.
Following Lewis \cite{L2011}, if $\varphi\in\IBr(G|Q)$ and $T$ is a subgroup of $G$,
we write $I_\varphi(T|Q)$ for the set of those members of $\IBr(T|Q)$ that induce $\varphi$.

The following is our main result, which generalizes Theorem \ref{CLN} in vertex form,
and which presents an explicit description of the set $L_\varphi(Q,\delta)$
of lifts of $\varphi$ with vertex $(Q,\delta)$.

\begin{thmA}\label{A}
Let $G$ be a $p$-solvable group for some odd prime $p$,
let $\varphi\in\IBr(G)$ have vertex $Q$, and let $\delta\in\Irr(Q)$ be a linear character.
Suppose that $N$ is a normal subgroup of $G$ such that
$Q\in\Syl_p(N)$ and $\delta$ extends to $N$.
Write $T=N{\bf N}_G(Q,\delta)$. Then the following hold.

{\rm (1)} For any $\chi\in L_\varphi(Q,\delta)$,
each irreducible constituent of $\chi_N$ is factorable.

{\rm (2)} Each Brauer character $\eta\in I_\varphi(T|Q)$
has a unique lift $\tilde\eta\in\Irr(T|Q,\delta)$.
Furthermore, $\tilde\eta^G$ lies in $L_\varphi(Q,\delta)$,
and the map $\eta\mapsto\tilde\eta^G$
defines a bijection $I_\varphi(T|Q)\to L_\varphi(Q,\delta)$.

{\rm (3)} $|L_\varphi(Q,\delta)|\le |{\bf N}_G(Q):{\bf N}_G(Q,\delta)|$.
\end{thmA}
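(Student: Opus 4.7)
The plan is to establish parts (1), (2), (3) in sequence. Parts (1) and (2) are the substance; part (3) will follow from (2) by combining a multiplicity count in $\varphi_T$ with a Frattini-argument computation of $[G:T]$.

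For (1), I would begin with a nucleus $(W,\gamma)$ for $\chi\in L_\varphi(Q,\delta)$, so $\chi=\gamma^G$ with $Q\in\Syl_p(W)$ and $\gamma$ factorable satisfying $(\gamma_p)_Q=\delta$. Since $Q\le W$ and $Q\in\Syl_p(N)$, one has $Q\in\Syl_p(W\cap N)$. Let $\theta$ be an irreducible constituent of $\chi_N$; Mackey's formula relates $\theta$ to irreducible constituents $\mu$ of $\gamma_{W\cap N}$, and each such $\mu$ is factorable because the $p$-special and $p'$-special factors of $\gamma$ restrict to $p$-special and $p'$-special characters of $W\cap N$ respectively. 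The hypothesis that $\delta$ extends to $N$ supplies a $p$-special extension $\hat\delta_p\in\Irr(N)$ of $\delta$ (unique by Isaacs' results on $p$-special characters for odd $p$), and Gallagher's theorem applied to $\Irr(N\mid\hat\delta_p)$ together with the canonical uniqueness of $p$-special pieces allows one to assemble a factorable decomposition of $\theta$ whose $p$-part lies over $\delta$.

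For (2), given $\eta\in I_\varphi(T|Q)$, I would construct $\tilde\eta$ by first descending to $N$: pick an irreducible Brauer constituent $\eta_0$ of $\eta_N$, which by normal-subgroup vertex theory in $p$-solvable groups must also have vertex $Q$. Applying (1) at the $N$-level yields a canonical factorable lift $\hat\eta_0\in\Irr(N)$ of $\eta_0$ with $((\hat\eta_0)_p)_Q=\delta$. The inertia group $I_T(\hat\eta_0)$ then carries the Clifford correspondent of $\eta$, and combining this correspondent with $\hat\delta_p$ and the canonically unique $p'$-special extensions (available in $p$-solvable groups for odd $p$) produces $\tilde\eta\in\Irr(T|Q,\delta)$ lifting $\eta$. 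Uniqueness of $\tilde\eta$ in $\Irr(T|Q,\delta)$ follows from the uniqueness of factorable decompositions. Irreducibility of $\tilde\eta^G$ comes from the inclusion $I_G(\hat\eta_0)\le T$ guaranteed by the choice $T=N\N_G(Q,\delta)$ (any $g\in G$ stabilizing $\hat\eta_0$ stabilizes $\delta=((\hat\eta_0)_p)_Q$ up to $\N_N(Q)$-action, forcing $g\in T$). Injectivity of $\eta\mapsto\tilde\eta^G$ follows from Clifford correspondence at $T$ together with the uniqueness in $\Irr(T|Q,\delta)$, and surjectivity is delivered by applying (1) to an arbitrary $\chi\in L_\varphi(Q,\delta)$ to locate a $T$-level nucleus.

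For (3), part (2) gives $|L_\varphi(Q,\delta)|=|I_\varphi(T|Q)|$. For each $\eta\in I_\varphi(T|Q)$, Frobenius reciprocity yields $[\eta,\varphi_T]=[\eta^G,\varphi]=1$ and $\eta(1)=\varphi(1)/[G:T]$, so
\[
\varphi(1)=\sum_{\sigma\in\IBr(T)}[\sigma,\varphi_T]\,\sigma(1)\ \ge\ |I_\varphi(T|Q)|\cdot\frac{\varphi(1)}{[G:T]},
\]
which gives $|I_\varphi(T|Q)|\le[G:T]$. Finally, Frattini's argument $G=N\N_G(Q)$ combined with the Dedekind identity yields $\N_G(Q)\cap T=\N_N(Q)\N_G(Q,\delta)$; since $\delta$ extends to $N$, any $n\in\N_N(Q)$ automatically fixes the $N$-extension $\hat\delta$ as a class function on $N$, hence fixes $\delta=\hat\delta_Q$, so $\N_N(Q)\le\N_G(Q,\delta)$ and therefore $[G:T]=|\N_G(Q):\N_G(Q,\delta)|$. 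Combining these yields (3). The main obstacle I anticipate is the construction-and-uniqueness step in (2): assembling the lift $\tilde\eta$ from $p$-special and $p'$-special factors compatibly with the vertex datum $(Q,\delta)$ and verifying that $\tilde\eta^G$ is irreducible with vertex pair exactly $(Q,\delta)$ will require delicate orchestration of the odd-$p$ uniqueness theorems of Isaacs for $p$-special characters in $p$-solvable groups.
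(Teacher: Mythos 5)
Your part (3) matches the paper's argument (the multiplicity bound is the paper's Lemma \ref{a}, and your index computation is correct), but parts (1) and (2) each have a genuine gap at the decisive point. In (1) your sketch never uses the hypothesis that $\chi$ is a \emph{lift}, nor that $p$ is odd, and both are essential. After Mackey you must pass from a factorable constituent $\mu$ of $\gamma_{W^x\cap N}$ to a constituent $\theta$ of $\mu^N$, and induction does not preserve factorability; the entire difficulty is to show that the $p'$-part of $\theta$ is genuinely $p'$-special, which ``Gallagher plus uniqueness of $p$-special pieces'' does not deliver (Gallagher also does not apply to $\Irr(N\mid\hat\delta_p)$, since $Q$ is not normal in $N$). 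The paper's route is to induce $\gamma=\alpha\beta$ up to $NW\supseteq N$: the linear $p$-special factor extends there because $\delta$ is $NW$-stable (Lemmas \ref{stab} and \ref{ext}), while $\beta^{NW}$ is $p'$-special by Lemma \ref{I5.2} --- and that lemma is exactly where the lift property and $p>2$ enter, via the $p$-rationality of $p'$-special characters and membership in $B_{p'}(NW)$. One then restricts the factorable character $\tilde\delta\beta^{NW}$ to $N$ and applies Clifford's theorem.

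In (2) the existence and uniqueness of the lift $\tilde\eta\in\Irr(T|Q,\delta)$ is the deepest input, and your construction does not supply it. There is no theorem giving ``canonically unique $p'$-special extensions'' ($p'$-special characters need not extend at all), and since $\tilde\eta$ itself need not be factorable, ``uniqueness of factorable decompositions'' cannot yield uniqueness of the lift. The paper imports this step wholesale: $\delta$ is $T$-stable by Lemma \ref{stab}, so Theorem A of \cite{WJ2023} (Lemma \ref{WJ}) gives a bijection $\Irr(T|Q,\delta)\to\IBr(T|Q)$ by restriction to $p$-regular elements, producing $\tilde\eta$ at once. Your surjectivity step likewise glosses over why the Clifford correspondent of $\chi$ over $T$ has vertex pair exactly $(Q,\delta)$ rather than a $G$-conjugate; the paper needs the conjugacy of vertex pairs for lifts (Lemma \ref{CL1.2}) together with a normal-nucleus argument, and its injectivity proof further requires the Frattini argument and Lemma \ref{prod} to place the conjugating element inside $T$. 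Unless you intend to reprove the Wang--Jin bijection, you must cite it; as written, the core of (2) is assumed rather than proved.
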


It is clear that if $\varphi$ has a normal vertex $Q$,
then Theorem A applies (by taking $N=Q$).
Actually, as an immediate consequence of Theorem A,
we prove the following:

\begin{corA}\label{B}
Let $G$ be a $p$-solvable group with $p>2$, and
suppose that $\varphi\in\IBr(G)$ has vertex $Q$.
If $K\NM G$ is a $p'$-subgroup such that $KQ\NM G$,
then
$$|L_\varphi(Q,\delta)|\le |{\bf N}_G(Q):{\bf N}_G(Q,\delta)|$$
for each $\delta\in\Irr(Q)$.
\end{corA}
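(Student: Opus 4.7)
The plan is to deduce Corollary~B by a direct application of Theorem~A with the obvious choice $N := KQ$. By hypothesis we have $N \NM G$, so what remains is to verify the two structural conditions required by Theorem~A: (i) $Q \in \Syl_p(N)$, and (ii) every linear character $\delta \in \Irr(Q)$ extends to $N$.

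For (i), since $K$ is a $p'$-group and $Q$ is a $p$-group, I would note that $K \cap Q = 1$, so $|N| = |KQ| = |K||Q|$, whose $p$-part is $|Q|$; hence $Q \in \Syl_p(N)$. For (ii), the isomorphism $N/K = KQ/K \cong Q/(K \cap Q) = Q$ lets me regard any linear $\delta \in \Irr(Q)$ as a linear character of $N/K$, and inflation produces a linear character $\tilde\delta \in \Irr(N)$ that extends $\delta$. Both checks are essentially bookkeeping once $N = KQ$ is identified.

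With (i) and (ii) established, Theorem~A(3) applied to the data $(G,\varphi,Q,\delta,N)$ directly yields
\[
|L_\varphi(Q,\delta)| \le |{\bf N}_G(Q):{\bf N}_G(Q,\delta)|
\]
for every \emph{linear} $\delta \in \Irr(Q)$. If $\delta$ is non-linear, the Cossey--Lewis result recalled after Theorem~\ref{CLN} (all vertex pairs for a lift are linear when $p$ is odd) forces $L_\varphi(Q,\delta) = \emptyset$, so the inequality is trivial in that case.

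The main observation to highlight is that there is no substantive obstacle here: the corollary is essentially the remark that the hypothesis ``$K \NM G$ is a $p'$-subgroup with $KQ \NM G$'' produces, via $N := KQ$, a normal subgroup of $G$ of precisely the shape demanded by Theorem~A. The content of the corollary lies entirely in Theorem~A; the role of this proof is only to supply the witness $N$ and to extend $\delta$ trivially across the $p'$-part.
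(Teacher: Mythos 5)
Your proposal is correct and follows essentially the same route as the paper: take $N=KQ$, observe that $Q\in\Syl_p(N)$ and that $\delta$ extends to $N$ (via $N/K\cong Q$), apply Theorem~A, and dispose of non-linear $\delta$ by the Cossey--Lewis conjugacy/linearity result so that $L_\varphi(Q,\delta)=\emptyset$ in that case. The paper's proof is just a terser version of the same argument.
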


It is clear that Corollary \ref{B} applies to the group $G=\GL(2,3)$ and $p=3$,
by taking $Q\in\Syl_3(G)$ and $K=O_2(G)$, so that $KQ=\SL(2,3)$.
Note that $Q$ is not normal in $G$,
so Theorem \ref{CLN} is invalid in this simple example.

All groups considered in this paper are finite, and the notation and terminology are mostly taken from \cite{I2006,N1998}.

\section{Preliminaries}
In this section we will review some definitions and notation from \cite{I2018} and \cite{N2002};
see also \cite{JW2023}.

Let $G$ be a $\pi$-separable group for a set $\pi$ of primes.
We say that $\chi\in\Irr(G)$ is {\bf $\pi$-special} if $\chi(1)$ is a $\pi$-number
and the determinantal order $o(\theta)$ is a $\pi$-number
for every irreducible constituent $\theta$ of the restriction $\chi_S$
for every subnormal subgroup $S$ of $G$.

The following is Theorem 2.2 of \cite{I2018}.

\begin{lem}\label{prod}
Let $G$ be a $\pi$-separable group,
and suppose that $\alpha,\beta\in\Irr(G)$ are $\pi$-special and $\pi'$-special,
respectively. Then $\alpha\beta$ is irreducible.
Also, if $\alpha\beta=\alpha'\beta'$,
where $\alpha'$ is $\pi$-special and $\beta'$ is $\pi'$-special,
then $\alpha=\alpha'$ and $\beta=\beta'$.
\end{lem}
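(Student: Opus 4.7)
The argument I envisage proceeds by induction on $|G|$, handling both irreducibility and the uniqueness of the factorization simultaneously. In the base case, suppose $G$ is itself a $\pi$-group; then $\beta(1)$ is both a $\pi$-number (dividing $|G|$) and a $\pi'$-number (by $\pi'$-speciality), so $\beta(1)=1$, and likewise $o(\beta)$ must be a $\pi$-number while dividing $|G|$ only through $\pi'$-powers, giving $o(\beta)=1$. Hence $\beta=1_G$, so $\alpha\beta=\alpha$ is irreducible, and the same argument applied to any second factorization $\alpha\beta=\alpha'\beta'$ forces $\beta'=1_G$ and $\alpha=\alpha'$. The case where $G$ is a $\pi'$-group is entirely dual.

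For the inductive step, $\pi$-separability gives a proper nontrivial normal subgroup $M$ of $G$ that is either a $\pi$-group or a $\pi'$-group; by the symmetry between $\pi$ and $\pi'$ in the hypothesis, assume $M$ is a $\pi'$-group. The first observation is that $M\subseteq\Ker(\alpha)$: each irreducible constituent $\theta$ of $\alpha_M$ is $\pi$-special on $M$ by applying the definition to the subnormal subgroup $M$, and then $\theta(1)\mid|M|$ together with $\theta(1)$ being a $\pi$-number force $\theta(1)=1$; similarly $o(\theta)=1$, so $\theta=1_M$. Consequently $\alpha_M=\alpha(1)\cdot 1_M$.

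Now pick an irreducible constituent $\theta$ of $\beta_M$, set $T=I_G(\theta)$, and let $\beta_0\in\Irr(T|\theta)$ be the Clifford correspondent of $\beta$ over $\theta$. An auxiliary argument, based on the subnormal-restriction clause in the definition of $\pi'$-speciality being compatible with the Clifford correspondence, shows $\beta_0$ is itself $\pi'$-special in $T$. Because $\alpha$ is a class function on all of $G$, the induced-character formula yields the identity
\[
(\alpha_T\cdot\beta_0)^G \;=\; \alpha\cdot\beta,
\]
which is the technical hinge. If $T<G$, I would combine this identity with the Clifford correspondence $\Irr(T|\theta)\to\Irr(G|\theta)$ and the inductive hypothesis applied inside $T$ to conclude that $\alpha_T\beta_0$ is irreducible, and hence $\alpha\beta$ is. For uniqueness, restricting a second factorization $\alpha\beta=\alpha'\beta'$ to $M$ gives $\alpha(1)\beta_M=\alpha'(1)\beta'_M$, so $\beta,\beta'$ lie over the same $G$-orbit of $\theta$; applying the inductive hypothesis to the Clifford correspondents in $T$ returns $\alpha=\alpha'$ and $\beta=\beta'$.

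The main obstacle I anticipate is twofold: first, verifying that the Clifford correspondent of a $\pi'$-special character is again $\pi'$-special, which requires checking the determinantal-order condition on every subnormal subgroup of $T$, not just on $M$; second, dealing with the fully ramified case $T=G$, where induction on $|G|$ does not reduce the order. In this last case, $\theta$ is $G$-invariant and $\beta_M=e\theta$, and one must exploit $\alpha_M=\alpha(1)\cdot 1_M$ together with a direct degree and Mackey calculation to deduce the irreducibility of $\alpha\beta$ without descent to a smaller group.
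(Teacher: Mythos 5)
The paper does not actually prove this lemma: it is quoted verbatim as Theorem 2.2 of Isaacs' \emph{Characters of Solvable Groups} (Gajendragadkar's product theorem), so there is no in-paper argument to compare against, and your outline must be judged on its own. It has the right general shape (induction on $|G|$ through a normal $\pi$- or $\pi'$-subgroup, reduction via Clifford correspondents), and the preliminary steps are sound: the base case is fine, and the computation showing $M\le\Ker(\alpha)$ for a normal $\pi'$-subgroup $M$ is correct. But two steps are genuine gaps rather than routine verifications.

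First, to apply the inductive hypothesis in $T=I_G(\theta)$ to the product $\alpha_T\cdot\beta_0$, you need $\alpha_T$ to be a $\pi$-special character \emph{of $T$} --- in particular irreducible --- and nothing in your argument addresses this. It is in fact true, because $|G:T|$ divides $\beta(1)=|G:T|\beta_0(1)$ and hence is a $\pi'$-number, and restrictions of $\pi$-special characters to subgroups of $\pi'$-index are again $\pi$-special; but that restriction theorem is itself a nontrivial result of essentially the same depth as the lemma being proved, and must be either quoted or established (the same issue recurs in your uniqueness step, where you need $\alpha_T=\alpha'_T$ to force $\alpha=\alpha'$, i.e., injectivity of restriction on $\pi$-special characters). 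Second, and more seriously, the fully ramified case $T=G$ is exactly where the content of the theorem lives, and ``a direct degree and Mackey calculation'' is not an argument: with $\theta$ invariant and possibly nonlinear, no descent is available in your setup, and $\alpha_M=\alpha(1)1_M$ alone does not yield $[\alpha\beta,\alpha\beta]=1$. The standard proofs avoid this dead end by inducting through a chief factor \emph{at the top} of $G$: if $G/N$ is a $\pi$-group, then $\beta_N$ is irreducible (its ramification and orbit indices over $N$ divide both a $\pi$-number and a $\pi'$-number, hence equal $1$), while $\alpha_N$ is a sum of $\pi$-special constituents, and the argument closes by comparing $[(\alpha\beta)_N,(\alpha\beta)_N]$ with $[\alpha_N,\alpha_N]$. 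As written, your proof does not go through; either import the two auxiliary theorems you are implicitly using, or switch to the top-down chief-factor induction.
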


In this paper, we are particularly interested in the case where $\pi$ consists of a single prime $p$,
and in this case we say that $\chi$ is {\bf factored} if $\chi=\alpha\beta$ for some $p$-special character $\alpha$
and $p'$-special character $\beta$ of $G$.
For convenience, we often write $\chi_p=\alpha$ and $\chi_{p'}=\beta$.

\begin{lem}\label{ext}
Let $G$ be a $p$-solvable group with $P\in\Syl_p(G)$,
and let $\theta\in\Irr(P)$.
Then $\theta$ extends to $G$ if and only if $\theta(x)=\theta(y)$
whenever $x,y\in P$ are conjugate in $G$.
In this case, $\theta$ has a unique $p$-special extension to $G$.
\end{lem}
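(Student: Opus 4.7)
The ``only if'' direction is immediate: any $\chi\in\Irr(G)$ is a class function on $G$, so if $\chi_P=\theta$ then $\theta$ inherits the property that its values are constant on $G$-conjugacy classes of elements of $P$.

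For the ``if'' direction together with the uniqueness claim, I would argue by simultaneous induction on $|G|$, building a unique $p$-special extension $\chi\in\Irr(G)$ of $\theta$. The base case $G=P$ is trivial, since every irreducible character of a $p$-group is $p$-special. For the inductive step, I would pick a minimal normal subgroup $M\NM G$ and split into two cases. If $M$ is a $p'$-group, then $P\cap M=1$ and $PM/M\cong P$ canonically, so $\theta$ corresponds to a character $\bar\theta$ of the Sylow $p$-subgroup of $G/M$; the fusion hypothesis descends to $G/M$ (using the Schur--Zassenhaus-type fact that two $p$-elements of $G$ lying in the same coset of $M$ are already $M$-conjugate), so induction provides a unique $p$-special extension $\bar\chi\in\Irr(G/M)$ of $\bar\theta$, and inflating $\bar\chi$ to $G$ yields a $p$-special extension of $\theta$ (inflation along a $p'$-kernel preserves the determinantal-order condition defining $p$-special characters). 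If instead $M\le P$, let $\mu\in\Irr(M)$ lie under $\theta$; the fusion hypothesis forces $\theta_M$ to be $G$-invariant, so the $P$-orbit and the $G$-orbit of $\mu$ coincide. When this orbit is nontrivial, the inertia group $T=I_G(\mu)$ is a proper subgroup of $G$ containing $P$'s Clifford stabilizer; Clifford correspondence reduces the construction of the extension to the smaller group $T$, to which induction applies. When the orbit is trivial, $\mu$ is $G$-invariant, and I would first produce a $p$-special extension $\hat\mu\in\Irr(G)$ of $\mu$ (using the standard extension theorem for $G$-invariant characters of normal $p$-subgroups of $p$-solvable groups), then apply induction on $G/M$ to the quotient character of $P/M$ determined by $\theta$ via Gallagher's correspondence with $\hat\mu_P$, and finally multiply the two extensions.

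The main obstacle is the subcase where $\mu$ is $G$-invariant: one must construct a $p$-special extension of $\mu$ and combine it, through Gallagher's correspondence, with a $p$-special character of $G/M$ so that the product is both $p$-special and restricts to $\theta$ on $P$. Uniqueness of the $p$-special extension propagates through each reduction step, since at every stage the inductive hypothesis supplies uniqueness in the smaller group; this gives the final clause of the lemma. An alternative, if one wishes to avoid the detailed induction, is to invoke directly the Gajendragadkar--Isaacs restriction theorem, which asserts that the restriction map from $p$-special characters of $G$ to $\Irr(P)$ is a bijection onto the set of characters satisfying the fusion condition.
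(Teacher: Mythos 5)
Your closing remark is in fact the paper's entire proof: the authors simply cite the Gajendragadkar--Isaacs restriction theorem (Theorem 3.6 and Corollary 3.15 of Isaacs, \emph{Characters of Solvable Groups}, with $\pi=\{p\}$), which says that restriction to $P\in\Syl_p(G)$ maps the $p$-special characters of $G$ injectively into $\Irr(P)$, with image exactly the characters that are constant on $G$-classes of elements of $P$. Combined with the trivial ``only if'' direction, that is the whole lemma, so if you take that route you are done and your argument coincides with the paper's.

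The from-scratch induction you offer as your primary argument, however, has a genuine gap in the case where the minimal normal subgroup $M$ is a $p$-group and the constituent $\mu\in\Irr(M)$ under $\theta$ is $G$-invariant. There is no ``standard extension theorem'' producing an extension $\hat\mu\in\Irr(G)$ of a $G$-invariant irreducible character of a normal $p$-subgroup of a $p$-solvable group: take $G=\SL(2,3)$ with $p=2$, $M={\bf Z}(G)$ the unique minimal normal subgroup, and $\mu$ the faithful linear character of $M$. Then $\mu$ is $G$-invariant but does not extend even to $P=Q_8$ (every linear character of $Q_8$ kills $Q_8'=M$), while the degree-two character $\theta\in\Irr(Q_8)$ over $\mu$ satisfies the fusion condition and does have a unique $2$-special extension to $G$. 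So the Gallagher step in your argument never gets started; an analogous example with an extraspecial group of order $p^3$ acted on by a $p'$-group shows the same failure for odd $p$. Two further steps in the non-invariant subcase also need justification and are not automatic: that the fusion condition is inherited by the Clifford correspondent of $\theta$ in $I_P(\mu)$ relative to $I_G(\mu)$ (the values of $\theta$ and of its Clifford correspondent are related by a sum over cosets, so constancy of one on fused classes does not obviously give constancy of the other), and that inducing a $p$-special character from $I_G(\mu)$ to $G$ again yields a $p$-special character. These are precisely the points where the actual proof of the restriction theorem does its real work, which is why the paper handles this lemma by citation rather than by direct argument.
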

\begin{proof}
This follows by Theorem 3.6 and Corollary 3.15 of \cite{I2018} with $\pi=\{p\}$.
\end{proof}

Recall that a character is said to be $p$-rational if its values are contained in the cyclotomic field $\mathbb{Q}_n$
with $n$ not divisible by $p$, where $\mathbb{Q}_n$ is the subfield of the complex numbers generated over
the rationals by a primitive $n$th root of unity.
It is known that each $p'$-special character is $p$-rational (see Corollary 2.13 of \cite{I2018}).

\begin{lem}\label{I5.2}
Let $G$ be a $p$-solvable group with $p>2$, and
suppose that $\chi\in\Irr(G)$ is a lift.
If $\chi=\beta^G$, where $\beta$ is a $p'$-special character of some subgroup $W$
with $p'$-index in $G$, then $\chi$ is also $p'$-special.
\end{lem}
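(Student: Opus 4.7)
My plan is to verify that $\chi$ has $p'$-degree and is $p$-rational, and then to invoke a theorem of Isaacs on $p'$-special characters to conclude.

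The degree calculation is immediate: $\chi(1)=[G:W]\,\beta(1)$ is a product of two $p'$-numbers, the first by the hypothesis that $W$ has $p'$-index in $G$, and the second because $\beta$ is $p'$-special. For $p$-rationality, I would use the remark preceding the lemma that every $p'$-special character is $p$-rational, so $\beta$ takes values in some cyclotomic field $\mathbb{Q}_n$ with $p\nmid n$. Since the induced-character formula expresses $\chi(g)=\beta^G(g)$ as a $\mathbb{Z}$-linear combination of values of $\beta$ on $G$-conjugates of $g$, $\chi$ itself has all values in $\mathbb{Q}_n$ and is therefore $p$-rational.

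The core of the proof is then to apply a result from \cite{I2018}: in a $p$-solvable group with $p$ odd, any $p$-rational lift of $p'$-degree is $p'$-special. This yields the conclusion for our $\chi$; the hypothesis $p>2$ enters precisely at this step, reflecting the fact that the underlying theory of $\pi$-special characters (for $\pi=p'$) and the $B_\pi$-correspondence require $2\notin\pi$. The main obstacle I anticipate is locating and citing the exact characterization in \cite{I2018} that takes as input the combined data of $p'$-degree, $p$-rationality, and being a lift, and returns $p'$-speciality; once that is in hand, the verifications of $p'$-degree and $p$-rationality are both one-line checks from the hypotheses and the standard induction formula.
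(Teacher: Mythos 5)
Your proposal is correct and follows essentially the same route as the paper: check that $\chi(1)=|G:W|\beta(1)$ is a $p'$-number, deduce $p$-rationality of $\chi$ from that of the $p'$-special character $\beta$, and then invoke Isaacs' theory. The "characterization" you anticipate needing is exactly what the paper assembles from two citations: a $p$-rational lift lies in $B_{p'}(G)$ (the comments after Theorem 5.2 of \cite{I2018}, where $p>2$ is used), and a $B_{p'}$-character of $p'$-degree is $p'$-special (Theorem 4.2 of \cite{I2018}).
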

\begin{proof}
As mentioned above, we see that $\beta$ is $p$-rational, which clearly implies that $\chi$ is also $p$-rational. Then $\chi$ lies in $B_{p'}(G)$ by the comments after Theorem 5.2 of \cite{I2018} (with $p'$ in place of $\pi$).
Note that $\chi(1)=|G:W|\beta(1)$, which is a $p'$-number, and
thus $\chi$ is $p'$-special by Theorem 4.2 of \cite{I2018}.
\end{proof}

We need the following result, which appears in \cite{CL2012} as Theorem 1.2.

\begin{lem}[Cossey-Lewis]\label{CL1.2}
Let $p$ be an odd prime, and let $G$ be a $p$-solvable group.
Suppose that $\chi\in\Irr(G)$ is a lift of $\varphi\in\IBr(G)$.
Then all the vertex pairs for $\chi$ are linear and conjugate in $G$.
\end{lem}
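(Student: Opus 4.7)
The plan is to establish both assertions---$G$-conjugacy and linearity---for two vertex pairs $(Q_1,\delta_1)$ and $(Q_2,\delta_2)$ of $\chi$, realized by nuclei $(W_1,\gamma_1)$ and $(W_2,\gamma_2)$ respectively, by induction on $|G|$. Recall that by hypothesis $\chi=\gamma_i^G$ with $\gamma_i$ factorable, $Q_i\in\Syl_p(W_i)$, and $\delta_i=((\gamma_i)_p)_{Q_i}$.

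The first step is a Clifford-theoretic reduction to the primitive case. Suppose $W_1<G$. Since $\chi=\gamma_1^G$ is a Fong--Swan lift of $\varphi$, restricting to $p$-regular elements gives $\varphi=(\gamma_1^0)^G$, and Frobenius reciprocity together with the factorable structure of $\gamma_1$ forces $\gamma_1^0$ to be a single irreducible Brauer character $\eta\in\IBr(W_1)$ that induces to $\varphi$. Thus $\gamma_1$ is itself a lift in $W_1$. By transitivity of induction combined with the uniqueness supplied by Lemma \ref{prod}, every nucleus of $\gamma_1$ inside $W_1$ is again a nucleus of $\chi$ inside $G$ realizing the same vertex pair, so the inductive hypothesis applied in $W_1$ yields linearity and $W_1$-conjugacy of the vertex data coming from $\gamma_1$. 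A Mackey-type bookkeeping argument then bridges between the two original choices $(W_1,\gamma_1)$ and $(W_2,\gamma_2)$, after which we may assume both $W_i=G$.

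In the remaining primitive case we have $\gamma_i=\chi$ for both $i$, so $\chi$ itself is factorable as $\chi=\chi_p\chi_{p'}$. Lemma \ref{prod} identifies the factors $\chi_p$ and $\chi_{p'}$ unambiguously, and the Sylow $p$-subgroups $Q_1,Q_2\in\Syl_p(G)$ are conjugate via some $g\in G$; this $g$ transports $\delta_1=(\chi_p)_{Q_1}$ to $\delta_2=(\chi_p)_{Q_2}$, settling the conjugacy assertion once we handle linearity.

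The main obstacle is the linearity of $\delta=(\chi_p)_Q$ in the primitive factorable situation. This is the step where the odd prime hypothesis is indispensable (the conclusion is known to fail for $p=2$). My proposed approach is to descend along a chain of $p$-local normal subgroups of $G$: at each stage, the uniqueness of $p$-special extensions of Sylow characters provided by Lemma \ref{ext} pins down $\chi_p$ as the canonical $p$-special extension of a character of $Q$. Combined with the primitivity of $\chi$ and the lift property, this should force $(\chi_p)_Q$ to be irreducible and, for $p$ odd, linear. Making this descent watertight---in particular, controlling how the $p$-special and $p'$-special factors interact with Clifford correspondents at each step---is precisely the delicate content carried out in \cite[Theorem 1.2]{CL2012}, and I expect it to be the hardest part of the argument.
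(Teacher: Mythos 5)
This lemma is not proved in the paper at all: it is imported verbatim as Theorem 1.2 of \cite{CL2012}, so there is no internal argument to compare against. Judged on its own terms, your sketch contains genuine gaps at exactly the two points where the theorem has content. First, the conjugacy assertion. Your reduction handles each nucleus $(W_i,\gamma_i)$ separately by induction inside $W_i$, which at best shows that vertex pairs arising from a \emph{fixed} nucleus are controlled; the entire difficulty is that $(W_1,\gamma_1)$ and $(W_2,\gamma_2)$ are two unrelated inducing pairs for the same $\chi$, and the phrase ``a Mackey-type bookkeeping argument then bridges between the two original choices'' is not an argument. For a general $\chi\in\Irr(G)$ with $G$ $p$-solvable, distinct nuclei need not yield conjugate vertex pairs; the lift hypothesis and the oddness of $p$ must enter precisely in this bridging step (in \cite{CL2012} this is done by relating every vertex pair of a lift to the normal nucleus, which is unique up to conjugacy, via restriction to a chain of normal subgroups). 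Your sketch never uses the lift hypothesis in the conjugacy part.

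Second, the linearity of $\delta=(\chi_p)_Q$ in the primitive factorable case. Nothing in your primitive-case discussion rules out a factorable lift $\chi=\chi_p\chi_{p'}$ with $\chi_p(1)>1$; indeed such examples exist for $p=2$, so any correct argument must locate where $p>2$ is used, and your appeal to Lemma \ref{ext} and ``the canonical $p$-special extension'' does not do this. You candidly state that making this step watertight ``is precisely the delicate content carried out in \cite[Theorem 1.2]{CL2012}'' --- but that is the very statement being proved, so the proposal is circular at its core. In short: the skeleton (induction, reduction toward a factorable/primitive situation, Sylow conjugacy transporting $\delta_1$ to $\delta_2$) is a reasonable opening, but both essential steps are deferred rather than carried out.
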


Finally, we present an elementary result, which is the Brauer version of Lemma 2.3 of \cite{CLN2011}.

\begin{lem}\label{a}
Let $G$ be a group with $T\le G$ and let $\varphi\in\IBr(G)$.
Then the number of irreducible Brauer characters of $T$
inducing $\varphi$ is lass than or equal to $|G:T|$.
\end{lem}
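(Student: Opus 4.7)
The plan is to mimic the well-known proof for ordinary characters (Lemma 2.3 of \cite{CLN2011}), which transfers almost verbatim to the Brauer setting once we have Frobenius reciprocity for induced Brauer characters and the degree formula $\eta^G(1)=|G:T|\eta(1)$.

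Let $\eta_1,\dots,\eta_k\in\IBr(T)$ be the distinct irreducible Brauer characters of $T$ with $\eta_i^G=\varphi$. First I would observe that by Frobenius reciprocity for Brauer characters,
\[
[\varphi_T,\eta_i]=[\varphi,\eta_i^G]=[\varphi,\varphi]=1,
\]
so each $\eta_i$ occurs as a constituent of the restriction $\varphi_T$ with multiplicity exactly one. In particular, $\varphi_T-(\eta_1+\cdots+\eta_k)$ is a Brauer character (or zero) of $T$, hence has nonnegative degree.

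Next I would compare degrees. Since $\eta_i^G=\varphi$, the induced-character degree formula gives $\eta_i(1)=\varphi(1)/|G:T|$ for every $i$. Evaluating the relation above at the identity yields
\[
\varphi(1)=\varphi_T(1)\ge \sum_{i=1}^{k}\eta_i(1)=k\cdot\frac{\varphi(1)}{|G:T|},
\]
which upon dividing by $\varphi(1)$ produces $k\le|G:T|$, as desired.

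There is essentially no obstacle here: the argument is structurally identical to the ordinary-character version, and the only thing one must verify is that Frobenius reciprocity and the degree formula hold for Brauer-character induction — both of which are standard (see, e.g., \cite{N1998}). Thus the proof is short and routine.
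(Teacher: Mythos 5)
Your overall strategy is the same as the paper's: decompose $\varphi_T$ so that each $\eta_i$ appears as a constituent, then compare degrees using $\eta_i(1)=\varphi(1)/|G:T|$. The degree comparison at the end is fine. The problem is the justification of the key step, namely that each $\eta_i$ occurs in $\varphi_T$. Your computation $[\varphi_T,\eta_i]=[\varphi,\eta_i^G]=[\varphi,\varphi]=1$ does not establish this: irreducible Brauer characters are \emph{not} orthonormal with respect to the inner product on $p$-regular classes, so $[\varphi,\varphi]$ need not equal $1$, and more importantly $[\varphi_T,\eta_i]$ does not compute the multiplicity of $\eta_i$ in the (unique) expression of $\varphi_T$ as a nonnegative integer combination of $\IBr(T)$. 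In particular, your conclusion that each $\eta_i$ occurs with multiplicity \emph{exactly} one is unjustified and can fail; the ``Frobenius reciprocity'' you invoke is only valid for Brauer characters when paired against projective characters.

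Fortunately, all you need is multiplicity \emph{at least} one, and this is true for module-theoretic reasons. If $M$ is a simple $kT$-module affording $\eta_i$ and $V$ is a simple $kG$-module affording $\varphi$, then $\eta_i^G=\varphi$ forces the induced module $M^G$ to have $\varphi$ as its full Brauer character, hence to be simple and isomorphic to $V$. Adjointness of induction and restriction gives
$\operatorname{Hom}_{kG}(M^G,V)\cong\operatorname{Hom}_{kT}(M,V_T)\ne 0$,
and since $M$ is simple, any nonzero homomorphism embeds $M$ into $V_T$; thus $\eta_i$ is an irreducible constituent of $\varphi_T$. With this repair, writing $\varphi_T=a_1\eta_1+\cdots+a_k\eta_k+\Delta$ with each $a_i\ge 1$ and $\Delta$ a Brauer character or zero, your degree count $\varphi(1)\ge k\,\varphi(1)/|G:T|$ goes through exactly as in the paper.
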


\begin{proof}
Suppose that $\eta_1,\ldots,\eta_n\in\IBr(T)$ are different irreducible Brauer characters inducing $\varphi$. Then each $\eta_i(1)=\varphi(1)/|G:T|$ and
$\varphi_T=a_1\eta_1 +\cdots+a_n\eta_n +\Delta$,
where $a_i$ are some integers and $\Delta$ is either a Brauer character of $T$ or zero.
Now $\varphi(1)\geq n(\varphi(1)/|G:T|)$, and the result follows.
\end{proof}

\section{Proofs}
We begin by introducing a useful notion.
Given a character $\delta$ of $Q$, where $Q$ is a subgroup of a group $G$,
we say that $\delta$ is {\bf $G$-stable} if $\delta(x)=\delta(y)$ whenever $x,y\in Q$ are conjugate in $G$.

The following is Theorem A of \cite{WJ2023}, which is key to our purpose.

\begin{lem}\label{WJ}
Let $G$ be a $p$-solvable group, and let $Q$ be a $p$-subgroup of $G$.
Suppose that $\delta$ is a linear character of $Q$ that is $G$-stable.
Then restriction to $p$-regular elements defines a natural bijection $\Irr(G|Q,\delta)\rightarrow\IBr(G|Q)$.
\end{lem}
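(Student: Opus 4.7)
The plan is to establish the bijection $\Irr(G|Q,\delta)\to\IBr(G|Q)$ given by restriction to $p$-regular elements by verifying three things separately: the map lands in $\IBr(G|Q)$, it is injective, and it is surjective. I would proceed by induction on $|G|$ and reduce to smaller groups through Clifford theory and through the nucleus description of characters in $\Irr(G|Q,\delta)$ from Navarro \cite{N2002}.

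For well-definedness, pick $\chi \in \Irr(G|Q,\delta)$ with nucleus $(W,\gamma)$. By definition, $\chi = \gamma^G$, $Q \in \Syl_p(W)$, and $\gamma = \gamma_p \gamma_{p'}$ with $(\gamma_p)_Q = \delta$. Since $Q$ is Sylow in $W$ as well as in $G$, the subgroup $W$ has $p'$-index in $G$, so restriction and induction of Brauer characters commute, giving $\chi^0 = (\gamma^0)^G$. I would then argue that $\gamma^0$ is irreducible with vertex $Q$, using that $\gamma_p$ is a $p$-special extension of $\delta$ to $W$ (where Lemma \ref{ext} is relevant, via the fact that $G$-stability of $\delta$ implies $W$-stability). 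Induction from $W$ to $G$ preserves irreducibility and the vertex, placing $\chi^0$ in $\IBr(G|Q)$.

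Injectivity and surjectivity both hinge on the rigidity imposed by $G$-stability of $\delta$. For injectivity, if two lifts $\chi_1,\chi_2 \in \Irr(G|Q,\delta)$ have the same $p$-regular restriction, Lemma \ref{CL1.2} says their vertex pairs are $G$-conjugate; but $G$-stability removes all conjugacy freedom from $\delta$, and pushing this rigidity into the nuclei (via an inductive Clifford reduction, cutting down by a minimal normal subgroup of $G$) forces $\chi_1=\chi_2$. For surjectivity, given $\varphi \in \IBr(G|Q)$, the Fong--Swan theorem supplies some lift, whose vertex pair is linear and (after $G$-conjugating) of the form $(Q,\delta')$ by Lemma \ref{CL1.2}. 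The hard part will be to show that one can in fact choose the lift so that $\delta' = \delta$ exactly, not merely that $\delta'$ is $N_G(Q)$-conjugate to $\delta$: this is where $G$-stability must be exploited most carefully, by identifying $\delta$ as the unique ``slot'' populated by a lift of $\varphi$. I expect this vertex-pair matching, likely carried out by reducing via a chief-series analysis to a case where $Q$ is normal in an appropriate overgroup, to be the main technical obstacle.
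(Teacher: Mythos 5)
This lemma is not proved in the paper at all: it is quoted verbatim as Theorem~A of the reference \cite{WJ2023} (Wang--Jin, \emph{Navarro vertices and lifts in solvable groups}), so there is no internal argument to compare yours against. Judged on its own terms, your proposal is a plan rather than a proof, and the plan leaves precisely the hard steps unresolved. For well-definedness, you assert that ``induction from $W$ to $G$ preserves irreducibility and the vertex'' of the Brauer character $\gamma^0$; neither half of this is automatic. Induction of an irreducible Brauer character from a subgroup of $p'$-index need not be irreducible, and even the irreducibility of $\gamma^0$ itself does not follow merely from $\gamma_p$ being a linear $p$-special extension of $\delta$. The fact that $\chi^0$ is irreducible for $\chi\in\Irr(G|Q,\delta)$ with $\delta$ linear and $G$-stable is essentially the content of the theorem being proved, so this cannot be waved through.

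For injectivity you appeal to ``pushing rigidity into the nuclei via an inductive Clifford reduction,'' and for surjectivity you explicitly concede that matching the vertex character $\delta'$ of a Fong--Swan lift to the prescribed $\delta$ (rather than to an ${\bf N}_G(Q)$-conjugate of it) is ``the main technical obstacle'' and is not carried out. That obstacle is exactly where the $G$-stability hypothesis must do its work, and identifying the difficulty is not the same as overcoming it. A further point: your argument invokes Lemma~\ref{CL1.2}, which requires $p$ odd, whereas the lemma as stated carries no parity hypothesis; the cited source handles this without that restriction (compare also \cite{JW2023} for $p=2$). As it stands, the proposal correctly identifies the architecture of a proof but supplies none of the load-bearing arguments; you should either carry out the reductions in detail or cite Theorem~A of \cite{WJ2023} as the paper does.
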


We need a simple property of $G$-stable characters.

\begin{lem}\label{stab}
Let $G$ be a $p$-solvable group with $N\NM G$,
and suppose that $Q\in\Syl_p(N)$ and $\delta\in\Irr(Q)$.
Assume also that $\delta$ extends to $N$ and is $H$-stable,
where $H$ is a subgroup of $G$ containing $Q$.
Then $\delta$ is $NH$-stable,
and the unique $p$-special extension of $\delta$ to $N$
is $NH$-invariant.
\end{lem}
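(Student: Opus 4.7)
The plan is to prove part (2) first and deduce (1) from it. Let $\tilde\delta$ denote the unique $p$-special extension of $\delta$ to $N$, provided by Lemma~\ref{ext}. Since $\tilde\delta$ is a class function on $N$, the $NH$-invariance claim for $\tilde\delta$ reduces to $H$-invariance. Once (2) is established, (1) follows immediately: for $x,y\in Q$ with $y=gxg^{-1}$ and $g\in NH$, one has $\delta(y)=\tilde\delta(y)=\tilde\delta(x)=\delta(x)$, using $\tilde\delta|_Q=\delta$.

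The key observation I would exploit is that $Q\in\Syl_p(H\cap N)$. Indeed, from $Q\le H\cap N\le N$ and $|Q|=|N|_p$ one reads off $|H\cap N|_p=|Q|$, so $Q$ is a Sylow $p$-subgroup of $H\cap N$. Hence for any $h\in H$, the conjugate $Q^h$ is again contained in $H\cap N$ and has order $|Q|$, so $Q^h$ is another Sylow $p$-subgroup of $H\cap N$. Applying Sylow's theorem inside $H\cap N$ produces some $c\in H\cap N$ with $Q^h=Q^c$, which forces $hc^{-1}\in {\bf N}_G(Q)\cap H={\bf N}_H(Q)$. Thus every $h\in H$ admits a decomposition $h=(hc^{-1})\cdot c$ with $hc^{-1}\in {\bf N}_H(Q)$ and $c\in N$.

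On the other hand, $\tilde\delta$ is invariant under ${\bf N}_H(Q)$: for $h_0\in {\bf N}_H(Q)$, the conjugate $\tilde\delta^{h_0}$ is still $p$-special (conjugation by $h_0$ is an automorphism of $N$), and for every $q\in Q$ we have $\tilde\delta^{h_0}(q)=\tilde\delta(h_0qh_0^{-1})=\delta(h_0qh_0^{-1})=\delta(q)$, using $h_0qh_0^{-1}\in Q$ and the $H$-stability of $\delta$. Uniqueness in Lemma~\ref{ext} then gives $\tilde\delta^{h_0}=\tilde\delta$. Feeding in the decomposition, for arbitrary $h\in H$,
$$\tilde\delta^h=\bigl(\tilde\delta^{hc^{-1}}\bigr)^c=\tilde\delta^c=\tilde\delta,$$
since $c\in N$ and $\tilde\delta$ is a class function on $N$. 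The main obstacle, and what makes the argument work, is recognizing the Frattini-style fact that $Q$ is a Sylow $p$-subgroup of $H\cap N$, which allows every element of $H$ to be split into an ${\bf N}_H(Q)$-part and a piece lying in $N$; once this is in hand, everything else is a direct application of Lemma~\ref{ext}.
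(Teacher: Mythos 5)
Your proof is correct and follows essentially the same route as the paper: both hinge on the Frattini-style decomposition $H=\mathbf{N}_H(Q)(H\cap N)$ (so $NH=\mathbf{N}_H(Q)N$) together with the uniqueness of the $p$-special extension from Lemma~\ref{ext}. The only difference is organizational — you establish the $NH$-invariance of $\tilde\delta$ first and read off the $NH$-stability of $\delta$ as a corollary, whereas the paper proves the stability statement directly and then deduces the invariance of the extension — but the substance is identical.
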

\begin{proof}
Observe that $Q$ is a Sylow $p$-subgroup of $N\cap H$,
and so $H={\bf N}_H(Q)(N\cap H)$ by the Frattini argument.
This implies that $NH=HN={\bf N}_H(Q)N$.
To show that $\delta$ is $NH$-stable,
we need to prove that $\delta(x)=\delta(y)$ whenever $x,y\in Q$ are
conjugate in $NH$. Now we can write $y=x^{hn}=(x^h)^n$ for some $h\in {\bf N}_H(Q)$
and $n\in N$,
and thus $x^h$ and $y$ are elements of $Q$ that are conjugate in $N$.
It follows by Lemma \ref{ext} that $\delta(x^h)=\delta(y)$.
Furthermore, since $x$ and $x^h$ are conjugate in $H$,
we have $\delta(x)=\delta(x^h)$ because $\delta$ is $H$-stable.
This proves that $\delta(x)=\delta(y)$, and hence $\delta$ is $NH$-stable.

By Lemma \ref{ext} again, there is a unique $p$-special character $\hat\delta$ of $N$ that is an extension of $\delta$.
For any element $h\in {\bf N}_H(Q)$, we see that $h$ fixes $\delta$ if and only if
$h$ fixes $\hat\delta$. Since $\delta$ is ${\bf N}_H(Q)$-invariant
and $NH={\bf N}_H(Q)N$, it follows that $\hat\delta$ is $NH$-invariant,
as required.
\end{proof}

We are now ready to prove Theorem A in the introduction,
which we restate here for convenience.

\begin{thm}\label{A}
Let $G$ be a $p$-solvable group for some odd prime $p$,
let $\varphi\in\IBr(G)$ have vertex $Q$, and let $\delta\in\Irr(Q)$ be a linear character.
Suppose that $N$ is a normal subgroup of $G$ such that
$Q\in\Syl_p(N)$ and $\delta$ extends to $N$.
Write $T=N{\bf N}_G(Q,\delta)$. Then the following hold.

{\rm (1)} For any $\chi\in L_\varphi(Q,\delta)$,
each irreducible constituent of $\chi_N$ is factorable.

{\rm (2)} Each Brauer character $\eta\in I_\varphi(T|Q)$
has a unique lift $\tilde\eta\in\Irr(T|Q,\delta)$.
Furthermore, $\tilde\eta^G$ lies in $L_\varphi(Q,\delta)$,
and the map $\eta\mapsto\tilde\eta^G$
defines a bijection $I_\varphi(T|Q)\to L_\varphi(Q,\delta)$.

{\rm (3)} $|L_\varphi(Q,\delta)|\le |{\bf N}_G(Q):{\bf N}_G(Q,\delta)|$.
\end{thm}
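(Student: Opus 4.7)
The strategy is to work inside $T$ via the bijection supplied by Lemma \ref{WJ}, and to reduce the counting in $G$ to counting in $T$. First, observe that $H := {\bf N}_G(Q,\delta)$ contains $Q$ and stabilizes $\delta$, and $\delta$ extends to $N$ by hypothesis, so Lemma \ref{stab} applies: $\delta$ is $T$-stable, and its unique $p$-special extension $\hat\delta$ to $N$ is $T$-invariant. Lemma \ref{WJ}, applied in $T$, then yields a natural bijection $\Irr(T|Q,\delta) \to \IBr(T|Q)$ via restriction to $p$-regular elements. Writing $\eta \mapsto \tilde\eta$ for the inverse, this already settles the existence and uniqueness of $\tilde\eta$ in part (2).

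The heart of the proof is (1). Fix $\chi \in L_\varphi(Q,\delta)$ with nucleus $(W,\gamma)$, so $Q \in \Syl_p(W)$, $\gamma = \gamma_p\gamma_{p'}$, and $(\gamma_p)_Q = \delta$; since $\delta$ is linear, $\gamma_p$ itself is a linear $p$-special character of $W$. For a constituent $\theta$ of $\chi_N$, Mackey's formula applied to $(\gamma^G)_N$ places $\theta$ inside $(\gamma|_{N \cap W^g})^N$ for some $g \in G$; replacing $\theta$ by a $G$-conjugate we may take $g = 1$. The subgroup $Q$ is a common Sylow $p$-subgroup of $N$, $W$, and $N \cap W$, and the linear character $\gamma_p|_{N \cap W}$ is a $p$-special extension of $\delta$; by the uniqueness in Lemma \ref{ext} applied to $N \cap W$, it agrees with $\hat\delta|_{N \cap W}$. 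Combining this with Lemma \ref{prod} to separate the $p$-special and $p'$-special factors of an irreducible constituent of the induced character, one deduces that $\theta = \hat\delta \cdot \beta$ for some $p'$-special $\beta \in \Irr(N)$, proving factorability.

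For (2), if $\eta \in I_\varphi(T|Q)$ then $\tilde\eta^G$ restricts on $p$-regular elements to $\eta^G = \varphi$ and has the same degree as $\varphi$, so it is irreducible, lifts $\varphi$, and inherits the vertex $(Q,\delta)$ by transitivity of nuclei. Conversely, for $\chi \in L_\varphi(Q,\delta)$, part (1) produces a factorable constituent $\theta = \hat\delta\beta$ of $\chi_N$; a Frattini-plus-Lemma \ref{ext} argument identifies the $G$-stabilizer of $\hat\delta$ with $T$, so the inertia group $T_\theta$ of $\theta$ in $G$ lies inside $T$. The Clifford correspondent $\psi \in \Irr(T_\theta|\theta)$ of $\chi$ then produces $\tilde\eta := \psi^T$, an irreducible character of $T$ lying over $\theta$ with $\tilde\eta^G = \chi$; by the bijection of paragraph 1 it sits in $\Irr(T|Q,\delta)$, and uniqueness of the Clifford correspondent over $\theta$ combined with that bijection delivers injectivity. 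Part (3) is then immediate from Lemma \ref{a}: $|L_\varphi(Q,\delta)| = |I_\varphi(T|Q)| \le |G:T|$, and a routine index calculation with the Frattini argument (noting that ${\bf N}_N(Q)$ already fixes $\delta$ because $\delta$ extends to $N$) gives $|G:T| = |{\bf N}_G(Q):{\bf N}_G(Q,\delta)|$.

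The chief obstacle will be part (1): tracking how the $p$-special factor $\gamma_p$ of the nucleus survives the Mackey decomposition and forces the normal constituent $\theta$ to carry $\hat\delta$ as its $p$-special part. The linearity of $\gamma_p$, the product uniqueness of Lemma \ref{prod}, and the extension uniqueness of Lemma \ref{ext} are the essential ingredients; once (1) is in place, parts (2) and (3) follow by the Clifford-theoretic and counting steps above.
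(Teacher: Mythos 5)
Your overall architecture matches the paper's (reduce to $T$ via Lemma \ref{WJ}, pass through a factorable constituent of $\chi_N$ and the Clifford correspondence, count with Lemma \ref{a} and a Frattini index computation), but the proof of part (1) --- which you rightly call the heart of the argument --- has a genuine gap. After Mackey and the projection formula you obtain $\theta=\hat\delta\cdot\nu$, where $\nu$ is an irreducible constituent of $\bigl(\gamma_{p'}|_{N\cap W}\bigr)^N$. Since $\hat\delta$ is linear, writing $\theta$ as $\hat\delta$ times an irreducible character is automatic and carries no content; the entire content of (1) is that $\nu$ is $p'$-special. That does not follow from Lemma \ref{prod}, which only says that a product of a $p$-special and a $p'$-special character is irreducible with unique factorization --- it does not say that a given irreducible character admits such a factorization. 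Nor does it follow from $|N:N\cap W|$ being a $p'$-number: irreducible constituents of a character induced from a $p'$-special character along a subgroup of $p'$-index need not be $p'$-special. The paper closes exactly this gap by going \emph{up} from $W$ to $NW$ rather than down to $N\cap W$: setting $\xi=\gamma^{NW}$, one has $\xi$ irreducible because $\xi^G=\chi$ is, $\xi=\tilde\delta\,\beta^{NW}$ by the projection formula, and then $\beta^{NW}$ is an \emph{irreducible} lift of $p'$-degree induced from a $p'$-special character, so Lemma \ref{I5.2} applies and gives $p'$-speciality. This is precisely where $p>2$ and the hypothesis that $\chi$ is a lift enter; your sketch never invokes Lemma \ref{I5.2} or the lift property in part (1), which is why the argument cannot close as written.

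A secondary gap occurs in your surjectivity argument for (2): you assert that $\tilde\eta:=\psi^{T}$ ``sits in $\Irr(T|Q,\delta)$ by the bijection of paragraph 1.'' Lemma \ref{WJ} only guarantees that $\tilde\eta^{0}$ (once known to lie in $\IBr(T|Q)$) has a \emph{unique preimage} in $\Irr(T|Q,\delta)$; it does not guarantee that the particular lift $\psi^{T}$ you constructed is that preimage. One must still verify that $\psi^{T}$ itself has vertex pair $(Q,\delta)$, which the paper does by constructing a normal nucleus for it over $\theta$ and invoking the conjugacy and linearity of vertex pairs for lifts (Lemma \ref{CL1.2}), together with the observation that $(\psi^T)^0$ has vertex $Q$ because $\delta$ is linear. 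With these two steps supplied, the rest of your outline (the inertia group of $\hat\delta\beta$ lying in $T$, injectivity via the Clifford correspondence, and the index computation in (3)) agrees with the paper.
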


\begin{proof}
(1) By definition, there is a nucleus $(W,\gamma)$ for $\chi$,
such that $\gamma\in\Irr(W)$ is factorable that induces $\chi$, $Q\in\Syl_p(W)$ and $(\gamma_p)_Q=\delta$. In particular, $\delta$ extends to $W$,
so it is $W$-stable.
Since $\gamma$ is factorable,
we can write $\gamma=\alpha\beta$, where $\alpha=\gamma_p$ and $\beta=\gamma_{p'}$.

We claim that $\alpha$ extends to a $p$-special character of $NW$.
To see this, observe that $Q$ is also a Sylow $p$-subgroup of $NW$
and $\delta$ is $NW$-stable by Lemma \ref{stab}.
It follows from Lemma \ref{ext} that $\delta$ has a unique $p$-special extension
to $NW$, say $\tilde\delta$.
Furthermore, note that $\tilde\delta$ is linear because $\delta$ is,
so $\tilde\delta_W$ is the unique $p$-special extension of $\delta$ to $W$.
This forces $\tilde\delta_W=\gamma_p=\alpha$ by the uniqueness of such an extension,
as claimed.

Next we need to show that $\beta^{NW}$ is $p'$-special.
Writing $\xi=\gamma^{NW}$, we see that $\xi^G=\gamma^G=\chi$,
and thus $\xi$ is irreducible.
Note that
$$\xi=(\alpha\beta)^{NW}=(\tilde\delta_W\beta)^{NW}
=\tilde\delta\beta^{NW},$$
so $\beta^{NW}$ is also irreducible.
Furthermore, it is clear that $\xi$ is a lift because $\chi$ is,
and we deduce that
$\xi^0=\tilde\delta^0(\beta^{NW})^0=(\beta^{NW})^0$.
This proves that $\beta^{NW}$ is a lift, and since
$\beta^{NW}(1)=|NW:W|\beta(1)$ is a $p'$-number,
it follows by Lemma \ref{I5.2} that $\beta^{NW}$ is $p'$-special.

Now we have proved that $\xi=\tilde\delta\beta^{NW}$ is factorable,
and hence an irreducible constituent of $\xi_N$ is also factorable.
Recall that  $\xi^G=\chi$, so $\chi_N$ has an irreducible constituent
that is factorable. By Clifford's theorem,
we know that each irreducible constituent of $\chi_N$ is factorable.

(2) By Lemma \ref{stab} again, we see that $\delta$ is $T$-stable (with $\N_G(Q,\delta)$ in place of $H$), and hence by Lemma \ref{WJ},
each $\eta\in I_\varphi(T|Q)\subseteq \IBr(T|Q)$ has a unique lift $\tilde\eta\in\Irr(T|Q,\delta)$.
Observe that
$$(\tilde\eta^G)^0=(\tilde\eta^0)^G=\eta^G=\varphi,$$
so $\tilde\eta^G\in L_\varphi(Q,\delta)$.
It follows that $\eta\mapsto\tilde\eta^G$ is a well defined map from $I_\varphi(T|Q)$ to $L_\varphi(Q,\delta)$.

Let $\hat\delta\in\Irr(N)$ be the unique $p$-special extension of $\delta$ to $N$
(see Lemma \ref{ext}). We will prove that for any $\chi\in L_\varphi(Q,\delta)$,
there exists a $p'$-special character $\alpha$ of $N$
such that $\alpha\hat\delta$ is a factorable irreducible constituent of $\chi_N$,
and induction defines a bijection $\Irr(T|\alpha\hat\delta)\to \Irr(G|\alpha\hat\delta)$.

Actually, since $\chi_N$ has an irreducible constituent that is factorable by part (1),
we can choose a nucleus $(W,\gamma)$ for $\chi$
with $N \le W$, and such that $\gamma$ is factorable with $\gamma^G=\chi$, $Q\in\Syl_p(W)$ and $(\gamma_p)_Q=\delta$ (see the construction of
nuclei given at Page 2763 of \cite{N2002}).
Let $\theta$ be an irreducible constituent of $\gamma_N$.
Then $\theta$ is factorable, and thus $\theta_p=\gamma_p$ because $\gamma_p(1)=\delta(1)=1$.
This proves that $\theta_p$ is a $p$-special extension of $\delta$ to $N$,
so $\hat\delta=\theta_p$ by Lemma \ref{ext} again.
Writing $\alpha=\theta_{p'}$, we see that $\theta=\alpha\hat\delta$, which is factorable lying under $\chi$.
Furthermore, it follows from Lemma \ref{stab} that $\hat\delta$ is invariant in
$T=N{\bf N}_G(Q,\delta)$, and by Lemma \ref{prod},
we conclude that the inertial group $I$ of $\alpha\hat\delta$ in $G$ is contained in $T$.
Applying the Clifford correspondence, we deduce that induction defines two bijections:
$$\Irr(I|\alpha\hat\delta)\to\Irr(T|\alpha\hat\delta) \;\text{and}\; \Irr(I|\alpha\hat\delta)\to\Irr(G|\alpha\hat\delta),$$
which implies that induction also defines a bijection
$\Irr(T|\alpha\hat\delta)\to \Irr(G|\alpha\hat\delta)$, as required.

Now we establish the surjectivity of our map.
Let $\chi\in L_\varphi(Q,\delta)$ be as above.
Then there exists a character $\psi\in\Irr(T|\alpha\hat\delta)$ that induces $\chi$.
Observe that $N\NM T$ and $\alpha\hat\delta$ is factorable lying under $\psi$,
so similar reasoning shows that
there exists a normal nucleus $(U,\mu)$ for $\psi$ with $N\le U$,
and such that $\mu$ is factorable and induces $\psi$.
Since $\psi$ is a lift and $p>2$, we know that $\mu_p$ must be a linear character.
It follows that $(\mu_p)_N=\theta_p=\hat\delta$, and hence $(\mu_p)_Q=\hat\delta_Q=\delta$.
Choose a Sylow $p$-subgroup $P$ of $U$ containing $Q$,
and let $\lambda=(\mu_p)_P$.
Then by definition, $(P,\lambda)$ is a vertex for $\psi$ and $\lambda_Q=\delta$.
Now $\psi^G=\chi$, and thus $(P,\lambda)$ is also a vertex for $\chi$.
By the uniqueness, we see that $(P,\lambda)$ and $(Q,\delta)$ are conjugate in $G$.
This shows that $P=Q$ and $\lambda=\delta$,
and hence $(Q,\delta)$ is also a vertex for $\psi$.
Furthermore, note that $(\psi^0)^G=(\psi^G)^0=\chi^0=\varphi$, so $\psi^0\in\IBr(T)$.
It is easy to see that $Q$ is a vertex for $\psi^0$
because $\delta$ is linear (see Theorem 4.6(d) of \cite{C2010} for example),
and hence $\psi^0\in I_\varphi(T|Q)$.
This proves that our map is surjective.

Finally, we establish the injectivity of the map.
Let $\zeta,\eta\in I_\varphi(T|Q)$ induce the same character
$\chi=\tilde\zeta^G=\tilde\eta^G\in L_\varphi(Q,\delta)$,
where $\tilde\zeta$ and $\tilde\eta$ are the lifts in $\Irr(T|Q,\delta)$ of $\zeta$ and $\eta$, respectively.
Reasoning as before (with $T$ in place of $G$),
there exist $p'$-special characters $\alpha_1$ and $\alpha_2$ of $N$
such that $\alpha_1\hat\delta$ and $\alpha_2\hat\delta$ are factorable irreducible constituents of
$\tilde\zeta_N$ and $\tilde\eta_N$, respectively.
Of course both $\alpha_1\hat\delta$ and $\alpha_2\hat\delta$ lie under $\chi$,
and thus they are conjugate in $G$ by Clifford's theorem.
Since $G=N\N_G(Q)$ by the Frattini argument, we can write $\alpha_1\hat\delta=(\alpha_2\hat\delta)^x$
for some $x\in \N_G(Q)$,
which forces $\alpha_1=(\alpha_2)^x$ and $\hat\delta=\hat\delta^x$ by Lemma \ref{prod}.
In particular, the element $x$ fixes $\hat\delta_N=\delta$ and thus $x\in\N_G(Q,\delta)\le T$.
This shows that $\tilde\zeta_N$ and $\tilde\eta_N$ have a
common irreducible constituent $\alpha_1\hat\delta$,
so $\tilde\zeta, \tilde\eta\in\Irr(T|\alpha_1\hat\delta)$.
Similar reasoning implies that induction defines a bijection
$\Irr(T|\alpha_1\hat\delta)\to\Irr(G|\alpha_1\hat\delta)$,
and hence $\tilde\zeta=\tilde\eta$.
Since $\zeta=\tilde\zeta^0=\tilde\eta^0=\eta$,
we conclude that our map is injective.
This proves (2).

(3) Note that $|I_\varphi(T|Q)|\leq|G:T|$ by Lemma \ref{a}.
Also, it is easy to see that
$$|G:T|=|N\N_G(Q):N\N_G(Q,\delta)|=|{\bf N}_G(Q):{\bf N}_G(Q,\delta)|.$$
By part (2), we deduce that $|L_\varphi(Q,\delta)|=|I_\varphi(T|Q)|\leq |{\bf N}_G(Q):{\bf N}_G(Q,\delta)|$.
The proof is now complete.
\end{proof}

\medskip
As an immediate consequence, we have the following, which is Theorem B in the introduction.

\begin{cor}
Let $G$ be a $p$-solvable group with $p>2$, and
suppose that $\varphi\in\IBr(G)$ has vertex $Q$.
If $K\NM G$ is a $p'$-subgroup such that $KQ\NM G$,
then $|L_\varphi(Q,\delta)|\le |{\bf N}_G(Q):{\bf N}_G(Q,\delta)|$ for each $\delta\in\Irr(Q)$.
In particular, we have $|L_\varphi|\le|Q:Q'|$.
\end{cor}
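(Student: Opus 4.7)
The plan is to apply Theorem~\ref{A} with the specific choice $N := KQ$ and then to sum the resulting bound over vertex pair orbits.

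First, I would verify the hypotheses of Theorem~\ref{A} for this choice of $N$. Since $K\NM G$ and $KQ\NM G$ by assumption, certainly $N\NM G$. Because $K$ has $p'$-order and $Q$ is a $p$-group, $K\cap Q=1$ and $Q\in\Syl_p(N)$. Most importantly, any linear character $\delta\in\Irr(Q)$ extends to $N$: the quotient map $N\to N/K$ restricts to an isomorphism $Q\to N/K$, so inflating $\delta$ through $N\to N/K\cong Q$ produces the required extension of $\delta$ to $N$. With all hypotheses satisfied, part~(3) of Theorem~\ref{A} yields directly
$$|L_\varphi(Q,\delta)|\le |{\bf N}_G(Q):{\bf N}_G(Q,\delta)|,$$
which is the first assertion of the corollary.

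For the bound $|L_\varphi|\le|Q:Q'|$, I would decompose $L_\varphi$ according to vertex pair. By Lemma~\ref{CL1.2}, every $\chi\in L_\varphi$ has a single $G$-conjugacy class of vertex pairs, and each vertex pair is linear; fixing $Q$, this yields a disjoint decomposition
$$L_\varphi=\bigsqcup_{[\delta]} L_\varphi(Q,\delta),$$
where $[\delta]$ ranges over the ${\bf N}_G(Q)$-orbits on those linear characters of $Q$ that actually occur as vertex characters of lifts of $\varphi$. Applying the inequality from the first part to a representative $\delta$ of each orbit, and noting that $|{\bf N}_G(Q):{\bf N}_G(Q,\delta)|$ is precisely the length of the ${\bf N}_G(Q)$-orbit through $\delta$, the sum collapses to at most the total number of linear characters of $Q$, namely $|Q:Q'|$.

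The whole argument is a short reformulation of Theorem~\ref{A}, so there is no serious obstacle; the only step deserving a genuine check is that $\delta$ always extends to $KQ$, and this follows immediately from $K\cap Q=1$ together with $N=KQ$. The second step, converting the vertex-pair bound into a bound on $|L_\varphi|$, is purely a bookkeeping argument once Lemma~\ref{CL1.2} is invoked to partition $L_\varphi$ by ${\bf N}_G(Q)$-orbits of vertex characters.
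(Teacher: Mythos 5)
Your proposal is correct and follows essentially the same route as the paper: take $N=KQ$, check that $Q\in\Syl_p(N)$ and that $\delta$ extends by inflation through $N/K\cong Q$, invoke Theorem~\ref{A}(3), and then your orbit-counting derivation of $|L_\varphi|\le|Q:Q'|$ is exactly the standard argument the paper leaves implicit. The only point you skip is the case of non-linear $\delta\in\Irr(Q)$ (to which Theorem~\ref{A} does not apply), where the bound holds trivially because $L_\varphi(Q,\delta)$ is then empty by Lemma~\ref{CL1.2} --- the paper disposes of this in its first line.
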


\begin{proof}
We may assume that $L_\varphi(Q,\delta)$ is nonempty.
Then $\delta$ is a linear character of $Q$ by Lemma \ref{CL1.2}.
Writing $N=KQ$, we see that $Q\in\Syl_p(N)$ and $\delta$ extends to $N$.
Now Theorem A applies, and the result follows.
\end{proof}

We mention that the $\pi$-analogue of Theorem A
also holds if we replace the prime $p$ and Brauer characters
by a set $\pi$ of primes and Isaacs' $\pi$-partial characters,
respectively. The proof is similar, so we do not intend to include it here.

\section*{Acknowledgement}
This work was supported by the NSF of China (No. 12171289)
and the NSF of Shanxi Province (No. 20210302124077).


\end{document}